\newtheorem{theorem}{Theorem}[section]
\newtheorem{lemma}[theorem]{Lemma}
\newtheorem{problem}[theorem]{Problem}
\newtheorem{proposition}[theorem]{Proposition}
\newtheorem{remark}[theorem]{Remark}
\theoremstyle{definition}
\newcommand{\ZZ}{\mathrm span}
\numberwithin{equation}{section}
\begin{document}
\title[On embeddings of $C_0(K)$ spaces into $C_0(L,X)$ spaces] {On embeddings of $C_0(K)$ spaces into $C_0(L,X)$ spaces}


\author{Leandro Candido}
\address{University of S\~ao Paulo, Department of Mathematics, IME, Rua do Mat\~ao 1010,  S\~ao Paulo, Brazil}
\email{lc@ime.usp.br}
\thanks{The author was supported by FAPESP, process number 2012/15957-6.}

\subjclass{Primary 46E40; Secondary 46B25}


\keywords{Isomorphic embeddings, $C_0(K, X)$ spaces}

\begin{abstract} 
Let $C_0(K, X)$ denote the space of all continuous $X$-valued functions defined on the locally compact Hausdorff space $K$ which vanish at infinity, provided with the supremum norm. If $X$ is the scalar field, we denote $C_0(K, X)$ by simply $C_0(K)$. In this paper we prove that for locally compact Hausdorff spaces $K$ and $L$ and for Banach space $X$ containing no copy of $c_0$,  if there is a isomorphic embedding of $C_0(K)$ into $C_0(L,X)$ where either $X$ is separable or $X^*$ has the Radon-Nikod\'ym property, then either $K$ is finite or $|K|\leq |L|$. As a consequence of this result, if there is a isomorphic embedding of $C_0(K)$ into $C_0(L,X)$ where $X$ contains no copy of $c_0$ and $L$ is scattered, then $K$ must be scattered.
\end{abstract}

\maketitle


\section{Introduction}

For a locally compact Hausdorff space $K$ and a real Banach space $X$, $C_0(K,X)$ denotes the Banach space of all continuous functions $f : K \to X$ which vanish at infinite, provided with the norm: $\|f\| = \sup_{x\in K} \|f (x)\|$. If $X$ is the field of the real numbers $\mathbb{R}$, we denote $C_0(K, X)$ by simply $C_0(K)$. If $K$ is compact these spaces will be denoted by $C(K,X)$ and $C(K)$ respectively. As usual, we also denote $C_0(\mathbb{N})$ by $c_0$. 

For Banach spaces $X$ and $Y$, a linear operator $T : X \to Y$ is called an isomorphic embedding if there are $A,B > 0$ such that
$A \|u\| \leq \|T u \| \leq B \|u\|, \forall \ u\in X.$

If such an embedding exists, we may say that $Y$ contains a copy of $X$ and then write $X\hookrightarrow Y$. On the other hand, we write $X\nhookrightarrow Y$ if $Y$ contains no copy of $X$. An isomorphic embedding of $X$ onto $Y$ is called an isomorphism. Whenever an isomorphism exists, we may say that the spaces are isomorphic and then write $X \sim Y$.

We will also adopt another standard notational conventions. For a Banach space $X$, $B_X$ stands for its unit ball, $S_X$ its unit sphere and $X^*$ its (topological) dual space. We denote by $\ZZ(E)$ the linear span of a set $E\subset X$ and by $\overline{\ZZ}(E)$ the closed linear span of $E$. The cardinality of any set $\varGamma$ will de denoted by $|\varGamma|$.

If $\mathcal{B}_K$ is the Borel $\sigma$-algebra of a locally compact space $K$ and $\mu:\mathcal{B}_K\to X$ is an additive function, the variation of $\mu$ is the positive function $|\mu|:\mathcal{B}_K\to \mathbb{R}$ defined by:
$$|\mu|(B)=\sup\ \sum_{i}\|\mu(B_i)\|, \   B \in \mathcal{B}_K, $$
the supremum being taken over all finite partitions $\{B_i\}$ of $B$ in $\mathcal{B}_K$.

An additive set function $\mu$ is said to have \emph{bounded variation} if $|\mu|(K)<\infty$. The variation $|\mu|$ is clearly additive and if $\mu$ has bounded variation, then $|\mu|$ is $\sigma$-additive if and only if $\mu$ is $\sigma$-additive. We say that $\mu$ is regular if $|\mu|$ is regular in the usual sense. We denote by $M(K,X)$ (or simply $M(K)$, when $X$ is the scalar field) the set of all regular, $\sigma$-additive measures $\mu:\mathcal{B}_K\to X$ of bounded variation. It is not difficult to prove that the map $\mu \mapsto |\mu|(K)$ is a norm making $M(K,X)$, with the usual operations, a Banach space. In this paper, we indentify the space $M(K,X^*)$ with the dual of $C_0(K,X)$ via the \emph{Singer Representation Theorem}, which asserts that there exists an iso\-me\-tric isomorphism between $C_0\left(K,X\right)^*$ and $M(K,X^*)$ such that for every linear functional $\varphi$  and the corresponding measure $\mu$ are related by 
\begin{displaymath}
\langle\varphi, f\rangle =\int f d\mu, \   \  f\in C_0\left(K,X\right),
\end{displaymath}
where the integral is in the sense of Dinculeanu \cite[p. 11]{Dinc1}. This caracterization when $K$ is a compact Hausdorff space can be found in \cite{H}. The locally compact case can be derived from the compact one as explained in \cite[p.2]{C0}.    

For compact Hausdorff spaces $K$ and $L$, a natural question is what properties are transfered from $L$ to $K$ if there is an isomorphism of $C(K)$ onto $C(L)$. A more general question can be posed in the following way: 

\begin{problem}\label{question0}  
Suppose that there is an isomorphic embedding of $C(K)$ into $C(L)$ and $L$ has some property $\mathcal{P}$. Does $K$ has property $\mathcal{P}$?
\end{problem} 

Since the classical paper of Banach \cite{Ba}, there are several fascinating developments related to the questions above (for some very recent, see \cite{Pleb1} and \cite{Pleb}). In the field of vector-valued continuous functions, the following question seems to be a natural extension of the previous one:

\begin{problem}\label{question}  
Suppose that there is an isomorphic embedding of $C(K)$ into $C(L,X)$, where $X$ is a Banach space containing no copy of $c_0$ and $L$ has some property $\mathcal{P}$. Does $K$ has property $\mathcal{P}$?
\end{problem} 

We observe that the Problem \ref{question} makes no sense without the condition: $c_0\nhookrightarrow X$. For if $K$ has some relevant property $\mathcal{P}$ and $L$ doesn't, it is clear that $C(K)$ embeds isomorphically in $C(L,C(K))$.

In the present paper, for locally compact Hausdorff spaces $K$ and $L$, we study isomorphic embeddings of $C_0(K)$ into $C_0(L,X)$ in the spirit of Problem \ref{question}. The results are the following:

\begin{theorem}\label{cardinality}
Let $K$ and $L$ be locally compact Hausdorff spaces and let $X$ be a Banach space containing no copy of $c_0$. If either $X$ is separable or $X^*$ has the Radon-Nikod\'ym property and $C_0(K) \hookrightarrow C_0(L,X)$, then either $K$ is finite or $|K|\leq |L|$. 
\end{theorem}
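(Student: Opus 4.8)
The plan is to attach to each point of $K$ a point of $L$ via a ``capture'' relation whose fibres are countable, so that $|K|\le|L|\cdot\aleph_0=|L|$ once $L$ is infinite. First I would dispose of the case $L$ finite: then $C_0(L,X)$ is linearly isometric to a finite $\ell_\infty$-sum of copies of $X$, which contains no copy of $c_0$ precisely because $X$ does not; since every infinite $C_0(K)$ contains $c_0$ (take disjointly supported bumps and span them), the embedding $C_0(K)\hookrightarrow C_0(L,X)$ forces $K$ to be finite. So I assume $L$ infinite and fix $T$ with $A\|f\|\le\|Tf\|\le B\|f\|$. Being bounded below, $T$ has surjective adjoint $T^*:M(L,X^*)\to M(K)$, so by the open mapping theorem each Dirac functional $\delta_x$ admits a preimage $\mu_x\in M(L,X^*)$ with $|\mu_x|(L)\le 1/A$ and $\int_L Tf\,d\mu_x=f(x)$ for all $f\in C_0(K)$.

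Next I would localize using the hypothesis $c_0\nhookrightarrow X$. For $l\in L$ set $T_l:C_0(K)\to X$, $T_lf=(Tf)(l)$; this operator fixes no copy of $c_0$, so by Pe\l czy\'nski's theorem it is weakly compact, hence represented by a countably additive regular vector measure $m_l:\mathcal B_K\to X$ with $T_lf=\int_K f\,dm_l$. Countable additivity forces $\|m_l(B_n)\|\to 0$ along any disjoint family $\{B_n\}$, so $m_l$ has at most countably many atoms; defining the relation $x\bowtie l\iff m_l(\{x\})\neq 0$, each set $\{x\in K: x\bowtie l\}$ is therefore countable.

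The heart of the argument is to show every $x$ is captured, i.e.\ that $x\bowtie l$ for at least one $l$. I would take bumps $h_\alpha$ shrinking to $x$ along the neighbourhood filter, $0\le h_\alpha\le 1$, $h_\alpha(x)=1$, with supports collapsing to $\{x\}$; regularity and the bounded convergence theorem for each countably additive $m_l$ give $(Th_\alpha)(l)=\int_K h_\alpha\,dm_l\to m_l(\{x\})$. If $x$ were captured by no $l$, all these limits would vanish, and then dominated convergence against the \emph{finite} measure $|\mu_x|$ would yield $1=h_\alpha(x)=\int_L Th_\alpha\,d\mu_x\to 0$, a contradiction. This is exactly the step that consumes the extra hypotheses: the interchange of the limit with the Dinculeanu integral $\int_L(\cdot)\,d\mu_x$ requires the $X$-valued map $l\mapsto m_l(\{x\})$ to be $\mu_x$-measurable and the $X$--$X^*$ pairing integral to be legitimate, which is guaranteed either by separability of $X$ (Pettis measurability) or by the Radon--Nikod\'ym property of $X^*$ (a density for $\mu_x$). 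I then set $\lambda(x)$ to be any $l$ with $x\bowtie l$.

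Finally, $\lambda:K\to L$ has $\lambda^{-1}(l)\subseteq\{x:x\bowtie l\}$ countable, so $|K|\le|L|\cdot\aleph_0=|L|$, completing the proof. The hard part I expect is the third paragraph: one must verify that weak compactness of $T_l$ really produces a countably additive $m_l$ for which the bounded convergence $\int_K h_\alpha\,dm_l\to m_l(\{x\})$ holds along the neighbourhood net (needing regularity, and some care since $\{x\}$ need not be a $G_\delta$), and, above all, one must justify the dominated-convergence interchange under the Dinculeanu integral. That justification is precisely where ``$X$ separable or $X^*$ has the Radon--Nikod\'ym property'' is indispensable, and it is the point I would write out in full detail.
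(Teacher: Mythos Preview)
Your overall architecture coincides with the paper's: dispose of the case $L$ finite, show that the ``fibres'' $\{x\in K: x\bowtie l\}$ are countable, show that every $x\in K$ is captured by some $l\in L$, and conclude $|K|\le |L|\cdot\aleph_0=|L|$. Your finite-$L$ paragraph is correct, and your countable-fibre argument via Pe\l czy\'nski's theorem (each $T_l:C_0(K)\to X$ is weakly compact, hence represented by a countably additive $X$-valued $m_l$, which has only countably many atoms) is a clean variant of what the paper does. The paper instead bounds the sets $K_y(\epsilon)=\bigcup_{\varphi\in S_{X^*}}\{x:|T^*(\varphi\cdot\delta_y)|(\{x\})>\epsilon\}$ directly, using Rosenthal's theorem on operators from $c_0$; since $T^*(\varphi\cdot\delta_l)=\varphi\circ m_l$, the two formulations are equivalent.

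The genuine gap is in your capture step. You want to pass to the limit in
\[
1=h_\alpha(x)=\int_L Th_\alpha\,d\mu_x=\int_L \langle\gamma(l),(Th_\alpha)(l)\rangle\,d|\mu_x|(l)
\]
along the neighbourhood filter of $x$. For each fixed $l$ one indeed has $(Th_\alpha)(l)=\int_K h_\alpha\,dm_l\to m_l(\{x\})$, and the integrands are uniformly bounded by $\|T\|$. But $(h_\alpha)$ is a \emph{net}, not a sequence, and the dominated convergence theorem fails for nets; nothing in ``$X$ separable'' or ``$X^*$ has the Radon--Nikod\'ym property'' repairs this interchange. Your diagnosis that the extra hypothesis is consumed by measurability of $l\mapsto m_l(\{x\})$ and legitimacy of the pairing integral is not the real issue: the hypothesis is used to produce a density $\gamma$ for $\mu_x$, but the obstruction after that is the illicit net-limit, not measurability. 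Since $\{x\}$ need not be a $G_\delta$ in $K$, there is no evident way to replace the net by a sequence.

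The paper's proof of this step (its Lemma~2.3) avoids any dominated-convergence. Using the density $\gamma$, one first passes by a Lusin-type reduction to a compact $L_1\subset L$ of nearly full $|\mu_x|$-measure on which $l\mapsto\langle\gamma(l),v\rangle$ is continuous for every $v\in X$ and on which $l\mapsto|\varGamma_l|(K)$ is continuous, where $\varGamma_l=T^*(\gamma(l)\cdot\delta_l)$. A direct estimate then produces, for each neighbourhood $U$ of $x$, a point $y_U\in L_1$ with $\langle\gamma(y_U),(Tf_U)(y_U)\rangle>A/2$. Compactness of $L_1$ yields a cluster point $y_0$, and a careful regularity argument shows $|\varGamma_{y_0}|(\{x\})\ge A/16>0$. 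In other words, the desired $l$ is located on the $L$-side by compactness and continuity, not by pushing a net through an integral on the $K$-side. If you wish to salvage your third paragraph, you will need a device of this kind in place of the dominated-convergence step.
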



Follows from the previous theorem a extension of the main result of \cite{Candido1}. Another application of Theorem \ref{cardinality} provides us the following result:


\begin{theorem}\label{scatt}
Let $K$ and $L$ be locally compact Hausdorff spaces and let $X$ be a Banach space containing no copy of $c_0$. If $C_0(K) \hookrightarrow C_0(L,X)$ and $L$ is scattered, then $K$ is scattered.
\end{theorem}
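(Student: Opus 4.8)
The plan is to derive Theorem \ref{scatt} from Theorem \ref{cardinality} by a transfinite induction argument on the Cantor--Bendixson derivatives, reducing the scatteredness question to a statement about cardinalities that Theorem \ref{cardinality} controls. Recall that a locally compact Hausdorff space $K$ is scattered if and only if every nonempty closed subset has an isolated point, equivalently if and only if its Cantor--Bendixson derivation process eventually terminates at the empty set. Suppose, for contradiction, that $L$ is scattered but $K$ is not. Then $K$ contains a nonempty perfect closed subset $P$ (the intersection of the transfinite Cantor--Bendixson derivatives of $K$, which stabilizes at a nonempty perfect set precisely when $K$ fails to be scattered). My first step would be to transfer the embedding to this perfect piece.

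The key reduction is the following. First I would argue that a nonempty perfect locally compact Hausdorff space $P$ has cardinality at least $\mathfrak{c} = 2^{\aleph_0}$; this is the standard fact that a perfect set in a (locally) compact Hausdorff space supports a Cantor scheme and hence surjects onto the Cantor set, so $|P|\geq \mathfrak c$ and in particular $P$ is infinite. Next I would produce an isomorphic embedding of $C_0(P)$ into a space of the form $C_0(L,X)$. The natural route is to use the restriction/extension machinery for $C_0$ spaces: since $P$ is closed in $K$, restriction $C_0(K)\to C_0(P)$ is a quotient map, and conversely one can embed $C_0(P)$ back into $C_0(K)$ by composing with the original embedding after lifting, so that $C_0(P)\hookrightarrow C_0(K)\hookrightarrow C_0(L,X)$. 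On the target side, because $L$ is scattered, I would want to replace $L$ by a subspace or use the fact that cardinalities of scattered spaces are constrained, but the cleaner path is simply to keep $L$ and $X$ fixed and apply Theorem \ref{cardinality} directly to the composite embedding $C_0(P)\hookrightarrow C_0(L,X)$.

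Applying Theorem \ref{cardinality} to $C_0(P)\hookrightarrow C_0(L,X)$, since $X$ contains no copy of $c_0$ and satisfies the separability or Radon--Nikod\'ym hypothesis, and since $P$ is infinite, we conclude $|P|\leq |L|$. The final step is to contradict this using scatteredness of $L$. The point is that a scattered locally compact Hausdorff space cannot be ``too large'' relative to the perfect set we built: more precisely, I would show that a scattered $L$ and a perfect $P$ with $|P| \le |L|$ still leads to a contradiction, because the embedding $C_0(P)\hookrightarrow C_0(L,X)$ transfers topological structure, not merely cardinality. The cleanest argument is that if $L$ is scattered then $C_0(L,X)$ contains no copy of $C_0(P)$ for perfect $P$: one can use that scatteredness of $L$ forces $C_0(L,X)$ to inherit from $X$ the property of containing no copy of $c_0$ in a controlled way, whereas a perfect $P$ forces $c_0\hookrightarrow C_0(P)$, yielding the contradiction.

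The main obstacle, and the step requiring the most care, is the transfer of the embedding from $C_0(K)$ down to the perfect part $C_0(P)$ while keeping the codomain in the form $C_0(L,X)$ — one must check that restricting to a closed perfect subset does not destroy the hypotheses on $X$, and that the composite is genuinely an isomorphic embedding with the two-sided norm bounds. The subtle point is that the restriction map need not be injective, so the embedding of $C_0(P)$ into $C_0(K)$ must be constructed via a right inverse (extension operator), whose existence for closed subsets of locally compact spaces I would need to invoke carefully. Once that composite embedding is in place, Theorem \ref{cardinality} does the quantitative work, and the concluding contradiction follows from the incompatibility between the cardinality/structure of a perfect set and the scatteredness of $L$.
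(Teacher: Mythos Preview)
Your proposal has several genuine gaps that prevent it from going through.

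First, and most critically, Theorem~\ref{scatt} carries no separability or Radon--Nikod\'ym hypothesis on $X$, yet Theorem~\ref{cardinality} requires one; you cannot apply Theorem~\ref{cardinality} directly to $C_0(P)\hookrightarrow C_0(L,X)$. The paper handles this by first passing to a \emph{separable} space $C_0([0,1]\setminus\{t\})$ embedded in $C_0(K)$ (via composition with a continuous surjection $\alpha K\to[0,1]$, available precisely because $\alpha K$ is non-scattered), and then invoking Lemma~\ref{metrizable} to factor the embedding through $C(L_0,X_0)$ with $L_0$ countable compact and $X_0\subset X$ separable. Only then is Theorem~\ref{cardinality} applicable, and the contradiction obtained is the clean cardinality inequality $2^{\omega_0}=|[0,1]\setminus\{t\}|\le |L_0|=\omega_0$.

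Second, your endgame does not yield a contradiction. The conclusion $|P|\le|L|$ is perfectly consistent with $L$ scattered, since scattered locally compact spaces can have arbitrarily large cardinality (take any ordinal interval $[0,\kappa)$). Your attempted rescue---that scatteredness of $L$ forces $C_0(L,X)$ to contain no copy of $c_0$---is simply false: for any infinite $L$ one has $c_0\hookrightarrow C_0(L)\hookrightarrow C_0(L,X)$. Finally, the step embedding $C_0(P)$ into $C_0(K)$ via a bounded linear extension operator is not available for arbitrary closed subsets of locally compact Hausdorff spaces; such operators exist in the metrizable setting (Borsuk--Dugundji) but not in general. The paper circumvents this entirely by using composition with a surjection onto $[0,1]$ rather than extension from a closed subset, which is why the reduction lands on $C_0([0,1]\setminus\{t\})$ rather than on $C_0(P)$.
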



\begin{remark}
Since the unit interval $[0,1]$ is an uncountable perfect set and clearly $$C([0,1])\hookrightarrow C_0(\mathbb{N}, C([0,1])),$$
we conclude that the hypothesis $c_0 \nhookrightarrow X$, in general, cannot be removed neither in Theorem \ref{cardinality} nor in Theorem \ref{scatt}.
\end{remark}


\section{Auxiliary results}
\label{sec}
In order to prove our theorems, we need first to establish some auxiliary results.

\begin{proposition}\label{alpha}
Let $K$ and $L$ be locally compact Hausdorff spaces, $X$ be a Banach space containing no copy of $c_0$ and $T$ be a isomorphic embedding of $C_0(K)$ into $C_0(L,X)$. Given $\epsilon>0$, for every $y \in L$ the set $$K_y(\epsilon)=\bigcup_{\varphi \in S_{X^*}} \left\{x\in K:\left|T^*\left(\varphi \cdot \delta_y\right)\right|\left(\{x\}\right)>\epsilon \right\},$$
where $\delta_y$ denote the Dirac measure centred on $y$, is finite.
\end{proposition}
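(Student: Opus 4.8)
The plan is to argue by contradiction: assuming $K_y(\epsilon)$ is infinite, I will extract from it a sequence witnessing a copy of $c_0$ inside $X$, contradicting the hypothesis $c_0 \nhookrightarrow X$. Write $\nu_\varphi = T^*(\varphi\cdot\delta_y) \in M(K) = C_0(K)^*$. Since $\|\varphi\cdot\delta_y\| = \|\varphi\| = 1$, every such scalar measure satisfies $|\nu_\varphi|(K) \le \|T^*\| = \|T\| =: B$, a bound independent of $\varphi$, and for a scalar measure one has $|\nu_\varphi|(\{x\}) = |\nu_\varphi(\{x\})|$. For a \emph{single} $\varphi$ the set $\{x : |\nu_\varphi(\{x\})| > \epsilon\}$ is automatically finite, with at most $B/\epsilon$ elements by bounded variation; the whole difficulty is that $K_y(\epsilon)$ is a union over the (uncountable) sphere $S_{X^*}$.

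So suppose $K_y(\epsilon)$ is infinite and choose distinct points $(x_n)$ in it together with functionals $\varphi_n \in S_{X^*}$ such that $|\nu_{\varphi_n}(\{x_n\})| > \epsilon$. First I would disjointify: passing to a subsequence I may assume that $(x_n)$ either is closed and discrete in $K$ or converges to some point $x_\infty \notin \{x_n : n \in \mathbb{N}\}$; in both cases a convergent-sequence Hausdorff separation argument produces pairwise disjoint relatively compact open sets $W_n \ni x_n$. Using the regularity of each $\nu_{\varphi_n}$ I then shrink $W_n$ to an open $V_n$ with $x_n \in V_n \subseteq W_n$ and $|\nu_{\varphi_n}|(V_n \setminus \{x_n\}) < \epsilon/8$; disjointness is preserved by shrinking. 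By Urysohn's lemma choose $g_n \in C_0(K)$ with $0 \le g_n \le 1$, $g_n(x_n) = 1$ and $\operatorname{supp}(g_n) \subseteq V_n$, so the $g_n$ have pairwise disjoint supports.

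Set $v_n = (Tg_n)(y) \in X$. Since $\varphi_n(v_n) = \langle T^*(\varphi_n\cdot\delta_y), g_n\rangle = \int g_n \, d\nu_{\varphi_n}$, and this integral differs from the atomic term $g_n(x_n)\nu_{\varphi_n}(\{x_n\})$ by at most $|\nu_{\varphi_n}|(V_n\setminus\{x_n\}) < \epsilon/8$, I obtain $\|v_n\| \ge |\varphi_n(v_n)| > \epsilon - \epsilon/8$, so $\inf_n \|v_n\| > 0$ and the series $\sum_n v_n$ cannot converge. On the other hand, for any $\psi \in S_{X^*}$, using the disjointness of the supports, $\sum_n |\psi(v_n)| = \sum_n \bigl|\int g_n\, d\nu_\psi\bigr| \le \sum_n |\nu_\psi|(V_n) = |\nu_\psi|\bigl(\bigcup_n V_n\bigr) \le |\nu_\psi|(K) \le B$, so $\sum_n v_n$ is weakly unconditionally Cauchy. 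A weakly unconditionally Cauchy series whose terms do not tend to zero is not unconditionally convergent, so by the Bessaga--Pe\l czy\'nski theorem $X$ contains a copy of $c_0$, contradicting the hypothesis. Hence $K_y(\epsilon)$ is finite.

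I expect the genuine disjointification of the supports to be the main obstacle. Rosenthal's lemma applied to the countable family $(\nu_{\varphi_n})$ would only make the supports \emph{nearly} disjoint, and only for those particular measures, whereas the weak unconditional Cauchy estimate must hold simultaneously for \emph{every} $\psi \in X^*$, which is precisely what truly disjoint supports $V_n$ deliver. Securing such disjoint neighborhoods despite the possible accumulation of the points $x_n$ is the delicate point, and it is where the Hausdorff separation (after passing to a subsequence) is used.
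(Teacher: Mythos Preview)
Your argument is correct and matches the paper's almost line for line: both assume $K_y(\epsilon)$ infinite, pass to a subsequence of the $x_n$ with pairwise disjoint neighborhoods (the paper is just as terse here as you are, simply asserting ``by passing to a subsequence if necessary''), shrink by regularity, take disjointly supported Urysohn bumps $g_n$, and show that the vectors $v_n=Tg_n(y)\in X$ force $c_0\hookrightarrow X$. The only difference is the endgame: the paper packages the $v_n$ into the bounded operator $S:c_0\to X$, $(a_n)\mapsto T\bigl(\sum_n a_n g_n\bigr)(y)$, notes $\inf_n\|Se_n\|>0$, and quotes Rosenthal's theorem, whereas you reach the same conclusion via Bessaga--Pe\l czy\'nski applied to the weakly unconditionally Cauchy series $\sum_n v_n$.
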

\begin{proof}
Assume that $K_y(\epsilon)$ is infinite for some $y\in L$. We will show that this assumption leads to a contradiction. Under this hypothesis we may fix distinct points $x_1,x_2,x_3\ldots$ in $K_y$ and $\varphi_1,\varphi_2,\varphi_3\ldots$ in $S_{X^*}$ such that 
$$\left|T^*\left(\varphi_n \cdot \delta_y\right)\right|\left(\left\{x_n\right\}\right)=\left|T^*\left(\varphi_n \cdot \delta_y\right)\left(\{x_n\}\right)\right|>\epsilon, \ \forall \ n\in \mathbb{N}.$$

For every $n$, by regularity of the measure $T^*\left(\varphi_n\cdot \delta_y\right)$, we may fix an open neighborhood $V_n$ of $x_n$ such that 
\begin{equation}\label{rel-77}|T^*\left(\varphi_n\cdot \delta_y\right)|(V_n\setminus \{x_n\})\leq\frac{\epsilon}{2}.\end{equation}

Since $x_1,x_2,x_3\ldots$ are all distinct and $K$ is a locally compact Hausdorff space, by passing to a subsequence if necessary, we may assume a sequence of pairwise disjoint open sets $U_1,U_2,U_3\ldots$ such that $x_n \in U_n\subseteq V_n$ for every $n \in \mathbb{N}$.

By using the Urysohn Lemma, we can take functions $f_n\in C_{0}\left(K\right)$ such that $0\leq f_n\leq 1$, $f_n(x_n)=1$ and $f_n=0$ outside $U_n$.  From (\ref {rel-77}) we have 
\begin{align*}\left\|T f_n\left(y\right) \right\|&\geq \left|\langle\varphi_n,T f_n\left(y\right)\rangle\right|=\left|\int Tf_n d \left(\varphi_n \cdot \delta_y\right)\right|=\left|\int f_n d T^*\left(\varphi_n \cdot \delta_y\right)\right|\\ \notag
&\geq \left| T^*\left(\varphi_n\cdot \delta_y\right)\left(\left\{x_n\right\}\right)\right|-\left|\int f_n d T^*\left(\varphi_n \cdot \delta_y\right) - T^*\left(\varphi_n\cdot \delta_y\right)\left(\left\{x_n\right\}\right) \right|\\ \notag &
>\epsilon -\left|\int f_n d T^*\left(\varphi_n \cdot \delta_y\right) - \int_{\{x_n\}} f_n d T^*\left(\varphi_n \cdot \delta_y\right) \right|\\ \notag &
\geq\epsilon - |T^*\left(\varphi_n\cdot \delta_y\right)|(U_n\setminus \{x_n\})\geq\frac{\epsilon}{2}.\notag
\end{align*}  

Let $S:c_0 \to X$ be the operator defined by $S((a_n)_n)= T(\sum_n a_n\cdot f_n)(y)$. Clearly, $S$ is a bounded linear operator and if $\{e_n:n\in \mathbb{N}\}$ are the unit vectors in $c_0$, we have $\|S(e_{n})\|= \|Tf_{n}(y)\|\geq \epsilon/ {2}$ for every $n \in \mathbb{N}$. We deduce that $\inf_{n \in \mathbb{N}}\|S(e_n)\|\geq \epsilon/ {2}$ and according to a result due to Rosenthal, see \cite[Remark following Theorem 3.4]{Rose1}, there exists an infinite $N\subseteq \mathbb{N}$ such that $S$ restricted to $C_0(N)$ is an isomorphism onto its image. In other words,  $c_0 \hookrightarrow X$, a contradiction.

\end{proof}


The following theorem can be obtained from \cite[Theorem 34, p. 37]{Dinc1} and the definition of the Radon-Nikod\'ym property, see \cite[p. 61]{D}. It will be applied in the proof of the Lemma \ref{beta}. For a locally compact Hausdorff space and a Banach space $X$, a function $\gamma:K\rightarrow X^*$ will be called \emph{weakly$^*$-measurable}, or simply \emph{$w^*$-measurable}, if for every $v\in X$ the numerical function $y\mapsto \langle \gamma(y), v \rangle$ is Borel measurable. It will be called \emph{measurable} if is a pointwise limit of a sequence of Borel measurable functions with finite image. 


\begin{theorem}\label{simples-continua}
Let $L$ be a localy compact Hausdorff space and $X$ be a Banach space. If either $X$ is separable or $X^*$ has the Radon-Nikod\'ym property, then for every $\mu \in M(L,X^*)$ there is a $w^*$-measurable function $\gamma:L\rightarrow X^*$ satisfying: 
\begin{itemize}
\item[(a)] $\|\gamma(y)\|=1$ for every $y\in L$
\item[(b)] For each $f \in C_0(L,X)$, the function $y\mapsto \langle \gamma(y),f(y) \rangle$ is measurable
\item[(c)] For each $f \in C_0(L,X)$   
$$\int f d\mu = \int \langle \gamma(y),f(y)\rangle d|\mu|(y).$$
\end{itemize}
Moreover, when $X^*$ has the Radon-Nikod\'ym property, $\gamma$ may be chosen to be measurable.  
\end{theorem}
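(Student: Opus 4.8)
The plan is to realize $\gamma$ as the Radon--Nikod\'ym density $d\mu/d|\mu|$ and then verify the three properties in turn. First I would observe that $\mu$ is absolutely continuous with respect to its finite, positive, regular variation $|\mu|$: if $|\mu|(B)=0$ then $\|\mu(B)\|\le|\mu|(B)=0$. With this absolute continuity in hand, I would invoke \cite[Theorem 34]{Dinc1} to produce a function $\gamma:L\to X^*$ representing the density, which is $w^*$-measurable in general and, when $X^*$ has the Radon--Nikod\'ym property, measurable in the sense of the excerpt (a pointwise limit of finite-valued Borel functions). That theorem yields, for every $v\in X$ and every $B\in\mathcal B_L$, the scalar identity
\[
\langle\mu(B),v\rangle=\int_B\langle\gamma(y),v\rangle\,d|\mu|(y),
\]
together with the bound $\|\gamma(y)\|\le 1$ for $|\mu|$-almost every $y$.

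Next I would upgrade this norm bound to an equality. For any finite Borel partition $\{B_i\}$ of a set $B$ one has, from the displayed identity, $\|\mu(B_i)\|=\sup_{\|v\|\le1}|\langle\mu(B_i),v\rangle|\le\int_{B_i}\|\gamma\|\,d|\mu|$, so summing gives $|\mu|(B)\le\int_B\|\gamma\|\,d|\mu|$; combined with $\|\gamma\|\le1$ a.e.\ this forces $\int_B\|\gamma\|\,d|\mu|=|\mu|(B)=\int_B 1\,d|\mu|$ for every $B$, whence $\|\gamma(y)\|=1$ for $|\mu|$-almost every $y$. In the separable case one first checks that $y\mapsto\|\gamma(y)\|=\sup_{v\in D}\langle\gamma(y),v\rangle$ is measurable, $D$ being a countable dense subset of $B_X$, so that this computation is legitimate. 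The exceptional set $N=\{y:\|\gamma(y)\|\neq1\}$ is then $|\mu|$-null, and redefining $\gamma\equiv\varphi_0$ on $N$ for a fixed $\varphi_0\in S_{X^*}$ delivers property (a) without disturbing measurability or the integral identities.

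For property (b) I would exhibit $y\mapsto\langle\gamma(y),f(y)\rangle$ as a pointwise limit of measurable functions. When $X^*$ has the Radon--Nikod\'ym property, $\gamma$ is measurable and $f$ continuous, so the pairing is measurable after approximating $\gamma$ by finite-valued functions. In the separable case, $f$ is Borel as a map into the separable space $X$, hence a pointwise limit of countably-valued Borel functions $f_n$ with $\|f_n\|\le\|f\|$; each $\langle\gamma(\cdot),f_n(\cdot)\rangle$ is measurable by $w^*$-measurability of $\gamma$, and passing to the limit gives measurability of $\langle\gamma(\cdot),f(\cdot)\rangle$. Finally, for property (c) I would extend the scalar identity from indicators to all of $C_0(L,X)$: for a simple function $s=\sum_i v_i\mathbf 1_{B_i}$ it reads $\int s\,d\mu=\sum_i\langle\mu(B_i),v_i\rangle=\int\langle\gamma(y),s(y)\rangle\,d|\mu|(y)$, and approximating an arbitrary $f$ by such simple functions dominated by $\|f\|$ and applying dominated convergence (the integrands being bounded by $\|f\|$ and $|\mu|$ finite) yields the claim. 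I expect the main obstacle to be the separable case: securing a single $w^*$-measurable $\gamma$ that simultaneously represents all the scalar derivatives $d\langle\mu(\cdot),v\rangle/d|\mu|$---exactly the content borrowed from \cite[Theorem 34]{Dinc1}---and then handling the two distinct notions of measurability uniformly while verifying (b) and (c).
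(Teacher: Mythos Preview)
Your proposal is correct and follows precisely the route the paper indicates: the paper does not actually prove this theorem but states that it ``can be obtained from \cite[Theorem 34, p.~37]{Dinc1} and the definition of the Radon--Nikod\'ym property,'' and your argument is exactly that derivation carried out in detail---taking $\gamma=d\mu/d|\mu|$ from Dinculeanu's theorem, normalizing it to land in $S_{X^*}$, and verifying (b) and (c) by approximation. There is nothing to add.
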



In next lemma, we will follow an argument due to G. Plebanek, see \cite[Lemma 3.2]{Pleb1}. 


\begin{lemma}\label{beta}
Let $K$ and $L$ be infinite locally compact Hausdorff spaces, $X$ be a Banach space and $T$ be an isomorphic embedding of $C_0(K)$ into $C_0(L,X)$. If either $X$ is separable or $X^*$ has the Radon-Nikod\'ym property, then for each $x \in K$ there is some $y \in L$ and $\varphi\in S_{X^*}$ such that $|T^*\left(\varphi \cdot \delta_y\right)|(\{x\})>0$.
\end{lemma}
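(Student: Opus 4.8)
The plan is to argue by contradiction: suppose there is some $x_0 \in K$ such that $|T^*(\varphi \cdot \delta_y)|(\{x_0\}) = 0$ for every $y \in L$ and every $\varphi \in S_{X^*}$. The intuition is that if no functional $\varphi \cdot \delta_y$ ``sees'' the point $x_0$ through $T^*$, then the embedding $T$ must lose control of the Dirac-type evaluation at $x_0$, contradicting the lower bound $A\|u\| \leq \|Tu\|$ coming from the isomorphic embedding. Concretely, I would like to produce a norm-one (or norm-bounded-below) element of $C_0(K)$ concentrated near $x_0$ whose image under $T$ has small norm.

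The key tool is Theorem \ref{simples-continua}. First I would fix $x_0$ and pick a function $f \in C_0(K)$ peaking at $x_0$, say $0 \le f \le 1$ with $f(x_0) = 1$, so that $\|f\| = 1$ and hence $\|Tf\| \ge A$. This means there is a point $y_0 \in L$ with $\|Tf(y_0)\| \ge A/2$, say. Now I would apply the Singer representation: the dual action of $T^*$ on functionals $\varphi \cdot \delta_{y_0}$ for $\varphi \in S_{X^*}$ is governed by the measures $T^*(\varphi \cdot \delta_{y_0}) \in M(K)$. The condition $\|Tf(y_0)\| \ge A/2$ means, by Hahn--Banach, that there is some $\varphi \in S_{X^*}$ with $|\langle \varphi, Tf(y_0)\rangle| \ge A/2$, i.e.
\begin{displaymath}
\left| \int f \, dT^*(\varphi \cdot \delta_{y_0}) \right| \ge \frac{A}{2}.
\end{displaymath}
Writing $\nu = T^*(\varphi \cdot \delta_{y_0})$, our standing assumption gives $|\nu|(\{x_0\}) = 0$. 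The plan is then to shrink the support of $f$ towards $x_0$: using regularity of $|\nu|$ and the fact that $|\nu|(\{x_0\}) = 0$, I can choose a neighborhood $U$ of $x_0$ with $|\nu|(U)$ as small as I like, and replace $f$ by a new peaking function supported inside $U$. This would force $\big|\int f\, d\nu\big|$ to be small, contradicting the lower bound.

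The subtlety, and the main obstacle, is that the offending functional $\varphi$ and the point $y_0$ depend on the choice of $f$, so I cannot naively shrink $f$ while keeping the \emph{same} $\nu$. This is precisely where Plebanek's averaging argument and Theorem \ref{simples-continua} enter. Rather than tracking a single $\nu$, I would realize $\|Tf(y)\|$ through the $w^*$-measurable selector $\gamma$: for the vector measure $\mu$ dual to evaluation, item (c) expresses $\int f\, d\mu$ as an integral of $\langle \gamma(y), Tf(y)\rangle$ against a positive scalar measure $|\mu|$. The idea is to build, for each $n$, a peaking function $f_n$ supported in shrinking neighborhoods $U_n \downarrow \{x_0\}$ of $x_0$ with $f_n(x_0) = 1$, and use the representation together with the hypothesis $|T^*(\gamma(y)\delta_y)|(\{x_0\}) = 0$ for all $y$ to show that $\|Tf_n\| \to 0$ as the supports collapse onto the single point $x_0$. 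Since $\|f_n\| = 1$, this contradicts $\|Tf_n\| \ge A$.

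The technically delicate point will be interchanging the collapsing of the neighborhoods $U_n$ with the integration over $y \in L$: I must verify that the contribution to $\|Tf_n(y)\| = \sup_{\varphi \in S_{X^*}} |\int f_n \, dT^*(\varphi\delta_y)|$ coming from outside $\{x_0\}$ tends to zero uniformly enough, using regularity of each $|T^*(\varphi\delta_y)|$ and the measurability from Theorem \ref{simples-continua}(b) to control things measurably in $y$. Here the hypothesis that $X$ is separable or $X^*$ has the Radon--Nikod\'ym property is essential, since it is exactly what guarantees the selector $\gamma$ with properties (a)--(c) exists and makes the relevant functions measurable. I expect the argument to reduce, after dominated convergence in $y$, to the fact that for each fixed $y$ the integral $\int f_n \, dT^^*(\varphi\delta_y)$ is controlled by $|T^*(\varphi\delta_y)|(U_n)$, which shrinks to $|T^*(\varphi\delta_y)|(\{x_0\}) = 0$ by the contradiction hypothesis.
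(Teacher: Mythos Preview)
Your outline contains two genuine gaps that prevent it from going through as stated.

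\textbf{First, the target of the contradiction is misidentified.} You write that you will use the representation from Theorem~\ref{simples-continua} to show $\|Tf_n\|\to 0$. But the representation does not control the sup norm $\|Tf_n\|=\sup_{y\in L}\|Tf_n(y)\|$; it controls the integral $\int Tf_n\,d\mu$ against a \emph{fixed} vector measure $\mu\in M(L,X^*)$. The only natural choice of $\mu$ here is a Hahn--Banach extension $\mu_{x_0}$ of the functional $Tf\mapsto f(x_0)$ on $T(C_0(K))$, and then the identity you actually get is
\[
1=f_n(x_0)=\int Tf_n\,d\mu_{x_0}=\int_L\langle\gamma(y),Tf_n(y)\rangle\,d|\mu_{x_0}|(y)=\int_L\Bigl(\int_K f_n\,d\varGamma_y\Bigr)\,d|\mu_{x_0}|(y),
\]
with $\varGamma_y=T^*(\gamma(y)\cdot\delta_y)$. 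The contradiction you should be aiming for is that the right-hand side tends to $0$, not that $\|Tf_n\|\to 0$. Note that from the hypothesis $|\varGamma_y|(\{x_0\})=0$ one cannot conclude $\|Tf_n(y)\|\to 0$ even for a single $y$, because $\|Tf_n(y)\|$ involves a supremum over \emph{all} $\varphi\in S_{X^*}$, not just $\varphi=\gamma(y)$.

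\textbf{Second, dominated convergence is not available.} Even with the corrected target, your plan is to let $U_n\downarrow\{x_0\}$ and pass to the limit under $\int_L\cdots d|\mu_{x_0}|$. This requires a \emph{sequence} of neighborhoods, i.e.\ first countability of $K$ at $x_0$, which is not assumed. With the net $(U)_{U\in\mathcal U_{x_0}}$ of all neighborhoods, dominated convergence fails in general, and there is no single $U$ for which $|\varGamma_y|(U)$ is small for $|\mu_{x_0}|$-almost every $y$ simultaneously. A related obstruction is measurability: Theorem~\ref{simples-continua} gives that $y\mapsto\langle\gamma(y),g(y)\rangle$ is measurable for $g\in C_0(L,X)$, but $y\mapsto|\varGamma_y|(U)$ is a supremum over $B_{C_0(K)}$ and need not be measurable, so the ``averaged'' measure $B\mapsto\int_L|\varGamma_y|(B)\,d|\mu_{x_0}|(y)$ you would implicitly be using is not obviously well defined or regular.

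The paper circumvents both issues. After introducing $\mu_{x_0}$ and $\varGamma_y$, it applies a Lusin-type reduction (from \cite{Candido1}) to find a compact $L_1\subseteq L$ of nearly full $|\mu_{x_0}|$-measure on which $y\mapsto\int f\,d\varGamma_y$ is continuous for every $f\in C_0(K)$ and $y\mapsto|\varGamma_y|(K)$ is continuous. An averaging estimate then shows that for every neighborhood $U$ of $x_0$ there is $y_U\in L_1$ with $\langle\gamma(y_U),Tf_U(y_U)\rangle>A/2$. By compactness of $L_1$ the net $(y_U)$ has a cluster point $y_0$, and a direct computation using the two continuity properties yields the quantitative bound $|\varGamma_{y_0}|(\{x_0\})\ge A/16$. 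Thus the paper produces a specific $y_0$ rather than integrating over $y$, which is exactly what avoids the dominated-convergence and measurability problems in your plan.
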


\begin{proof} 
Whitout loss of generality, we may assume that $A\|f\|\leq \|T(f)\| \leq \|f\|$, $\  \forall f \in C_0(K)$, for some $A>0$. Pick $x\in K$ and let $\lambda_x\in  T(C_0(K))^*$ be such $\lambda_x(Tf)=f(x)$, $\forall f \in C_0(K)$. It is simple to check that $\|\lambda_x\|\leq 1/A$. We take $\mu_x$, a Hahn-Banach extension of $\lambda_x$ to $M(L,X^*)$.  

According to Proposition \ref{simples-continua}, there exists a $w^*$-measurable function $\gamma:L\to X^*$ (which can be chosen measurable if $X^*$ has the Radon-Nikod\'ym property)  so that
\begin{align}\label{00}
\int f d\mu_x = \int \langle \gamma(y),f(y)\rangle d|\mu_x|(y),\ \forall f \in C_0(L,X).
\end{align} 

Let $0<\epsilon< 1/2$. By applying  \cite[Proposition 3.3]{Candido1}, it is possible to find a compact set $L_0\subseteq L$ satisfying:
\begin{equation*}
\left|\mu_x\right|(L\setminus L_0)\leq \frac{\epsilon}{2}
\end{equation*}
and such that for any $v \in X$, the numerical function $L_0\ni y\mapsto \langle \gamma(y),v \rangle$ is continuous. For each $y\in L$ consider $\varGamma_y=T^*\left(\gamma(y)\cdot \delta_y\right)$. 

\

\textbf{Claim 1} For every $f\in C_0(K,X)$, the function $L_0\ni y\mapsto \int f d \varGamma_y$ is continuous.  

\

Whenever $f\in C_0(K,X)$ and $y,\ y_0\in L_0$ we may write
\begin{align*}
\left|\int f d \varGamma_y - \int f d \varGamma_{y_0} \right|&=\left|\langle \gamma(y), Tf(y) - Tf(y_0)\rangle + \langle \gamma(y) - \gamma(y_0), Tf(y_0)\rangle\right| \\
& \leq \|Tf(y)-Tf(y_0)\|+ \left|\langle \gamma(y) - \gamma(y_0),Tf(y_0)\rangle\right|.\notag
\end{align*}

Since the function $L_0\ni y\mapsto \langle \gamma(y),Tf(y_0) \rangle$ is continuous, by the definiton of $L_0$, it follows from the above relation that $y\mapsto \langle \gamma(y), Tf(y) \rangle$ is continuous. This establishes our claim. 

\

It follows from Claim 1 that
$$L_0\ni y \mapsto \sup_{B_{C_0(K)}} \left|\int f d \varGamma_{y}\right|=|\varGamma_y|(K)$$ is lower semicontinuous. According to the Lusin's theorem, there is a compact set $L_1\subseteq L_0$, such that $\left|\mu_x\right|(L_0\setminus L_1)\leq \epsilon/2$ and the function $L_1\ni y\mapsto |\varGamma_y|(K)$ is continuous.

Next, let $\mathcal{U}_x$ be the collection of all open neighborhoods of $x$, and for each $U \in \mathcal{U}_x$ let $f_{_U} \in C_0(K)$ so that $0\leq f_{_U} \leq 1$, $f_{_U}(x)=1$ and $f_{_U}=0$ outside $U$.

\

\textbf{Claim 2}. For a given $U \in \mathcal{U}_x$, there is $y_{_U} \in L_1$ such that $\langle \gamma(y_{_U}), Tf_{_U}(y_{_U}) \rangle>A/2$.

\

For otherwise, since $|\mu_x|(L_1)\leq |\mu_x|(L)<1/A$ and $|\mu_x|(L\setminus L_1)=|\mu_x|(L\setminus L_0)+|\mu_x|(L_0\setminus L_1)\leq \epsilon/2 +\epsilon/2=\epsilon$, recalling (\ref{00}), we have
\begin{align*}
 1 & =f_{_U}(x) = \int Tf_{_U} d \mu_x = \int \langle \gamma(y),Tf_{_U}(y)\rangle d|\mu_x|(y)\\
   & =\int_{L_1} \langle \gamma(y),Tf_{_U}(y)\rangle d|\mu_x|(y)+\int_{L\setminus L_1} \langle \gamma(y),Tf_{_U}(y)\rangle d|\mu_x|(y)\\
	 &\leq  \frac{A}{2}|\mu_x|(L_1)+ |\mu_x|(L\setminus L_1)\leq \frac{1}{2}+ \epsilon,
\end{align*}
this contradicts the choice of $\epsilon$ and establishes the Claim 2.

\ 

Next, considering $\mathcal{U}_x $ ordered by the reverse inclusion, by the Claim 2, we may fix a net $(y_{_U})_{U}$ in $L_1$ such that $\langle \gamma(y_{_U}), Tf_{_U}(y_{_U}) \rangle > A/2$ $\forall \ U\in \mathcal{U}_x$. Since $L_1$ is compact, passing to a subnet if necessary, we may assume that this net converges to $y_0\in L_1$. 

\

\textbf{Claim 3}. $|\varGamma_{y_0}|(\{x\})>0$. 

\

Let be $U\in \mathcal{U}_x$, where $\overline{U}$ is compact. Since $|\varGamma_{y_0}|$ is regular and $K$ is a locally compact Hausdorff space there are $V_0, V_1 \in \mathcal{U}_x$ with $\overline{V_0}$ and $\overline{V_1}$ compact, satisfying $x\in V_0\subset \overline{V_0}\subset U \subset \overline{U} \subset V_1\subset \overline{V_1}\subset K$ and 
\begin{equation}\label{01}
|\varGamma_{y_0}|(U \setminus \overline{V_0})<\frac{A}{16} \ \text{ and } \ |\varGamma_{y_0}|(K \setminus \overline{V_1})<\frac{A}{16}.
\end{equation}

Consider $f,g\in C_0(K)$ such that $0\leq g \leq 1$, $g=1$ in $\overline{V_0}$ and $g=0$ outside $U$; $0\leq f \leq 1$ and $f=1$ in $\overline{V_1}$.

For any $W\in \mathcal{U}_x$ with  $W \subseteq V_0$ we have $g\geq f_w \geq 0$ and hence
$$\int gd|\varGamma_{y_{_W}}|\geq \int f_w d|\varGamma_{y_{_W}}|\geq \int f_w d\varGamma_{y_{_W}}=\langle \gamma(y_{_W}), Tf_{_W}(y_{_W}) \rangle>\frac{A}{2}.$$

Since the function $L_1\ni y\mapsto |\varGamma_y|(K)$ is continuous, there is $W_0 \subseteq V_0$ such that $|\varGamma_{y_{_W}}|(K)< |\varGamma_{y_{0}}|(K)+A/4$ whenever $W \subseteq W_0$. We infer

$$\int(f-g)d |\varGamma_{y_{_W}}|< |\varGamma_{y_{_W}}|(K)-\frac{A}{2} \leq |\varGamma_{y_{0}}|(K)-\frac{A}{4}, \ \forall \ W \subseteq W_0.$$

By using the the Hahn decomposition theorem for $K$ and $\varGamma_{y_{0}}$ and by using the regularity of $|\varGamma_{y_0}|$, we may find a function $-1\leq h \leq 1$ in $C_0(K)$ such that

$$\int (f-g)d |\varGamma_{y_{0}}|- \frac{A}{32}< \left|\int h (f-g)d \varGamma_{y_{0}} \right|.$$

We know by Claim 1 that $\int h (f-g)d \varGamma_{y_{_W}}$ converges to  $\int h (f-g)d \varGamma_{y_{0}}$, thus, there is $W_1\subseteq W_0$ such that
$$\int (f-g)d |\varGamma_{y_{0}}|- \frac{A}{16}\leq \left|\int h (f-g)d \varGamma_{y_{_W}} \right|\leq \int (f-g)d| \varGamma_{y_{_W}}| \leq |\varGamma_{y_{0}}|(K)-\frac{A}{4},$$
whenever $W\subseteq W_1$. It follows that

$$\frac{A}{16}\leq \left(|\varGamma_{y_{0}}|(K)- \int f d |\varGamma_{y_{0}}|-\frac{A}{16}\right)+ \left(\int g d |\varGamma_{y_{0}}|-\frac{A}{16}\right).$$ 

Recalling (\ref{01}) we deduce that

$$\frac{A}{16}<|\varGamma_{y_{0}}|(U).$$

By the regularity of $|\varGamma_{y_{0}}|$, we conclude that $$0<\frac{A}{16} \leq|\varGamma_{y_{0}}|(\{x\}).$$
\end{proof}


The following lemma will be used in the proof of Theorem \ref{scatt}. 

\begin{lemma}\label{metrizable}
Let $K$ be a scattered compact Hausdorff space and let $X$ and $Y$ be Banach spaces where $Y$ is separable. If $Y\hookrightarrow C(K,X)$, there is a countable metrizable compact space $K_0$ and a separable Banach space $X_0\subset X$ such that $$Y \hookrightarrow C(K_0,X_0)\hookrightarrow C(K,X).$$  
\end{lemma}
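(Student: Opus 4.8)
The plan is to exploit scatteredness of $K$ to pass to a separable, in fact countable, subproblem. Since $Y$ is separable and embeds into $C(K,X)$, the image of a countable dense subset of $Y$ lives inside a separable subspace, and each function in that countable set has separable range in $X$; thus I would first let $X_0$ be the closed linear span of the (countably many) values of a dense sequence in $Y$, giving a separable $X_0 \subseteq X$ through which all these functions factor. The embedding $Y \hookrightarrow C(K,X)$ then actually takes values in $C(K,X_0)$, so by density $Y \hookrightarrow C(K,X_0) \hookrightarrow C(K,X)$, reducing to the case where $X$ itself is separable.

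Next I would reduce the compact space $K$ to a countable metrizable one. The key point is that a separable subspace $Y$ of $C(K,X_0)$ with $X_0$ separable can only ``see'' countably much of $K$. Concretely, I would take a countable dense set $\{g_n\}$ in (the image of) $Y$ inside $C(K,X_0)$ and consider the countable family of continuous scalar functions obtained by composing the $g_n$ with a countable norming subset $\{\varphi_m\} \subseteq S_{X_0^*}$, i.e. the functions $y \mapsto \langle \varphi_m, g_n(y)\rangle$. These generate a separable closed subalgebra $\mathcal{A}$ of $C(K)$ (one must throw in products and a unit to get a genuine subalgebra, using that $\mathcal{A}$ is separable), and the quotient map $K \to K_0 := K/{\sim}$, where $y \sim y'$ iff they are not separated by $\mathcal{A}$, yields a compact space $K_0$ such that $\mathcal{A} \cong C(K_0)$. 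Because $\mathcal{A}$ is separable, $C(K_0)$ is separable, so $K_0$ is metrizable; and being a continuous image of the scattered space $K$, it is scattered, hence a countable metrizable compact space.

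The final step is to check that the embedding descends to $C(K_0,X_0)$. By construction every $g_n$ is constant on the fibres of the quotient map $q:K\to K_0$ after testing against each $\varphi_m$; since $\{\varphi_m\}$ norms $X_0$, each $g_n$ is itself constant on fibres, so it factors as $g_n = \tilde g_n \circ q$ for a unique $\tilde g_n \in C(K_0,X_0)$, and the map $g_n \mapsto \tilde g_n$ is isometric. Extending by continuity over the dense set $\{g_n\}$ gives an isometric embedding of $Y$ into $C(K_0,X_0)$, while the composition-with-$q$ map embeds $C(K_0,X_0)$ isometrically back into $C(K,X_0) \hookrightarrow C(K,X)$, completing the chain $Y \hookrightarrow C(K_0,X_0) \hookrightarrow C(K,X)$.

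The main obstacle I anticipate is the passage to a subalgebra that is simultaneously separable and large enough to separate the relevant fibres: one must verify that the algebra generated by the countably many functions $y \mapsto \langle \varphi_m, g_n(y)\rangle$ (together with products and a unit in the unital case) remains separable, and that its quotient space $K_0$ genuinely satisfies $\mathcal{A} \cong C(K_0)$ via Stone--Weierstrass, so that separability of $\mathcal{A}$ forces metrizability of $K_0$. Care is also needed at infinity if one works with the locally compact formulation, but since the hypothesis here takes $K$ compact, the argument can be carried out cleanly within the unital Stone--Weierstrass framework.
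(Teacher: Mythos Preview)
Your argument is correct and follows the standard ``separable reduction via a countable subalgebra'' template: first shrink $X$ to the separable $X_0$ generated by the (separable) ranges of a dense sequence $\{g_n\}$ in the image of $Y$, then use a countable norming set $\{\varphi_m\}\subset S_{X_0^*}$ to produce countably many scalar functions $k\mapsto\langle\varphi_m,g_n(k)\rangle$, generate a separable unital subalgebra $\mathcal{A}\subset C(K)$, and pass to the Stone--Weierstrass quotient $K_0$; scatteredness of $K$ then forces $K_0$ to be countable. (One phrasing slip: the values of the $g_n$ are not literally ``countably many'', but you already noted each range is compact hence separable, which is what you need.)

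The paper reaches the same conclusion by a more concrete and slightly more elementary route that bypasses both the norming functionals and the explicit Stone--Weierstrass argument. It uses instead that the simple tensors $f\cdot u$ with $f\in C(K)$, $0\le f\le 1$, $u\in X$ have dense linear span in $C(K,X)$, so one can pick countable families $\{f_n\}\subset C(K)$ and $\{u_m\}\subset X$ with $Y\hookrightarrow\overline{\operatorname{span}}\{f_n\cdot u_m\}$. Then $X_0=\overline{\operatorname{span}}\{u_m\}$, and the map $\Lambda:K\to[0,1]^{\mathbb{N}}$, $k\mapsto(f_n(k))_n$, gives $K_0=\Lambda(K)$ directly as a metrizable (hence countable, being scattered) compact; the tensors $f_n\cdot u_m$ visibly factor through $\Lambda$, so $Y$ lands in the isometric copy of $C(K_0,X_0)$ inside $C(K,X)$. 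Your approach is more intrinsic and generalizes readily to other separable-reduction situations; the paper's is shorter here because it exploits the tensor structure of $C(K,X)$ to write down the quotient map to the Hilbert cube in one line.
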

\begin{proof}

Since $Y$ is a separable space isomorphically embedded in $C(K,X)$ and $\ZZ\left(\{f \cdot u : f\in C(K), \ 0\leq f\leq 1,\text{ and } u \in X\}\right)$ is dense in $C(K,X)$, it is possible to find sets $\{f_n:n\in \mathbb{N}\} \subset C(K)$, where $0\leq f_n \leq 1$ for each $n\in \mathbb{N}$, and $\{u_n:n\in \mathbb{N}\} \subset X$, such that $Y\hookrightarrow \overline{\ZZ}\left( \{ f_n\cdot u_m: n,m \in \mathbb{N}\}\right).$

Consider the function $\Lambda: K \to [0,1]^{\mathbb{N}}$ defined as $k\mapsto\Lambda(k)=(f_n(k))_{n\in \mathbb{N}}.$ Clearly, $\Lambda$ is continuous, since the projections in each coordinate are continuous. Thus, $K_0=\Lambda(K)\subseteq [0,1]^{\mathbb{N}}$ is compact. Moreover, $K_0$ is metrizable since $[0,1]^{\mathbb{N}}$ is metrizable and is scattered because it is a continuous image of a scattered compact space. Then, $K_0$ must be also countable.  

Let $X_0=\overline{\ZZ}\left(\{ u_n:n \in \mathbb{N}\}\right)$. Clearly, the space $C(K_0,X_0)$ can be isometrically embedded into $C(K,X)$ via the composition map $g\mapsto g\circ \Lambda$. Let $Y_0= \{g\circ \Lambda: g\in C(K_0,X_0)\}$. 

For each $n\in \mathbb{N}$, let $\pi_n:[0,1]^{\mathbb{N}}\to [0,1]$ be the projection on the $n$-th coordinate and let $\lambda_n$ be the restriction of $\pi_n$ to $K_0$. 

For any $m,n \in \mathbb{N}$ fix $g_{m,n}=\lambda_n\cdot u_m \in C(K_0,X_0)$ and observe that
$$g_{m,n}\circ \Lambda = (\lambda_n \circ \Lambda )\cdot u_m = f_n\cdot u_m.$$ It follows that $f_n\cdot u_m\in Y_0$ for every  $m,n \in \mathbb{N}$. Therefore, $$Y\hookrightarrow \overline{\ZZ} \left(\{ f_n\cdot u_m: n,m \in \mathbb{N}\}\right) \subseteq Y_0 \sim C(K_0,X_0) \hookrightarrow C(K,X).$$

\end{proof}


\section{Proofs of the main results}

\begin{proof}[Proof of the Theorem \ref{cardinality}]Let $T$ be an isomorphic embedding of $C_0(K)$ into $C_0(L,X)$. If $L$ is finite, then there is $n\in\mathbb{N}$ such that the space $X\oplus \stackrel{n}{\ldots} \oplus X$ (denoted simply by $X^n$) is isomorphic to $C_0(L,X)$. If $K$ were infinite, then $C_0(K)$, and consequently $X^n$, would have a copy of $c_0$. According to a result due to C. Samuel \cite[Theorem 1]{S}, $X$ would have a copy of $c_0$, a contradiction to our hypothesis. We conclude that if $L$ is finite, then $K$ must be finite as well.

Next, let us assume tha both $L$ and $K$ are infinite. For each $y \in L$ and $n \in \mathbb{N}$ consider $$K_y(1/n)=\bigcup_{\varphi \in S_{X^*}}\left\{x\in K:\left|T^*\left(\varphi \cdot \delta_y\right)\right|\left(\{x\}\right)>1/n\right\},$$
where $\delta_y$ denote the Dirac measure centred on $y$, and $$K_y=\bigcup_{n=1}^{\infty}K_y(1/n).$$

By the Proposition \ref{alpha}, $K_y(1/n)$ is finite for every $n\in \mathbb{N}$ and then $K_y$ is countable for every $y \in L$. According to Lemma \ref{beta}, for every $x \in K$ there exists $y \in L$ such that $x \in K_y$. Thus, $$K\subset \bigcup_{y\in L}K_y$$ and it follows that $$|K|\leq |\bigcup_{y\in L}K_y|\leq |L|\ \omega_0= |L|.$$
\end{proof}

\begin{proof}[Proof of the Theorem \ref{scatt}]
Suppose that $C_0(K) \hookrightarrow C_0(L,X)$ with $L$ scattered and $K$ non-scattered. If $\alpha K=K \stackrel{.}{\cup} \{\infty\}$ is the Aleksandrov one-point compactification of $K$, then $\alpha K$ is also non-scattered and there must be a continuous surjection $\varphi: \alpha K \to [0,1]$, see \cite[Theorem 8.5.4, p. 148]{Se}. 

Assume that $\varphi(\infty)=t\in [0,1]$. The composition map, $f\mapsto f\circ \varphi$, induces an isometric embedding of $C_0([0,1]\setminus\{t\})$ into $C_0(K)$. 

If $\alpha L = L \stackrel{.}{\cup} \{\infty\}$ is the Aleksandrov one-point compactification of $L$, the space $C_0(L,X)$ can be identified isometrically as a subspace of $C(\alpha L,X)$ namely: the subspace of all continuous functions $f:\alpha L\to X$ such that $f(\infty)=0$. We deduce $$C_0([0,1]\setminus\{t\})\hookrightarrow C_0(K)\hookrightarrow C_0(L,X)\hookrightarrow C(\alpha L,X).$$

Since $C_0([0,1]\setminus\{t\})$ is separable and $\alpha L$ is a scattered compact space, according to Lemma \ref{metrizable}, there is a countable metrizable compact space $L_0$ and a separable Banach space $X_0\subset X$ such that $$C_0([0,1]\setminus\{t\}) \hookrightarrow C(L_0,X_0)\hookrightarrow C(\alpha L,X).$$

Since $X_0$ is separable and contains no copy of $c_0$, we may apply the Theorem \ref{cardinality} and obtain $$2^{\omega_0}=|[0,1]\setminus\{t\}|\leq |L_0|=\omega_0,$$
a contradiction, which establishes the theorem.

\end{proof}


\bibliographystyle{amsalpha}

\end{document}